\documentclass[oneeqnum,onethmnum,onefignum]{siamltex}
\usepackage{datetime}
\newcommand{\timestamp}{\currenttime, \today}

\newcommand{\K}[1]{{\cal K}_{#1}}
\newcommand{\xG}[1]{x^G_{#1}}
\newcommand{\xM}[1]{x^M_{#1}}
\newcommand{\xE}[1]{x^E_{#1}}
\newcommand{\xR}[1]{x^R_{#1}}
\newcommand{\yE}[1]{y^E_{#1}}

\newcommand{\argmin}{\mathop{\rm argmin}}
\newcommand{\SPAN}{\mathop{\rm span}}
\newcommand{\norm}[1]{\|#1\|}
\newcommand{\ceil}[1]{\lceil#1\rceil}
\newcommand{\floor}[1]{\lfloor#1\rfloor}
\newcommand{\forany}{\mbox{ for any }}

\newcommand{\FOR}{\textbf{for}}
\newcommand{\STEP}{\textbf{step}}
\newcommand{\UNTIL}{\textbf{until}}

\begin{document}
\pagestyle{plain}
\bibliographystyle{siam}

\title {Equivalence of Krylov Subspace Methods \\
	for Skew-Symmetric Linear Systems}

\author{Stanley C. Eisenstat\thanks{%
	Department of Computer Science, Yale University,
	P.~O. Box 208285,
	New Haven, CT 06520-8285.
	(\timestamp)
	\vspace{2em}}}

\maketitle
\begin{abstract}
  In recent years two Krylov subspace methods have been
  proposed for solving skew symmetric linear systems,
  one based on the minimum residual condition, the other
  on the Galerkin condition.
  We give new, algorithm-independent proofs that in
  exact arithmetic the iterates for these methods are
  identical to the iterates for the conjugate gradient
  method applied to the normal equations and the classic
  Craig's method, respectively, both of which select
  iterates from a Krylov subspace of lower dimension.
  More generally, we show that projecting an approximate
  solution from the original subspace to the lower-dimensional
  one cannot increase the norm of the error or
  residual.
\end{abstract}

\begin{keywords}
Krylov methods,
skew-symmetric systems
\end{keywords}

\begin{AMS}
65F10, 65F50
\end{AMS}

\section{Introduction}

Consider the system of linear equations
\begin{equation} \label{eqn:Axb}
  A x = b
\end{equation}
where the $n \times n$ coefficient matrix $A$ is real,
skew symmetric (i.e., $A^t = -A$), and nonsingular (so
that $n$ is even).
Krylov subspace methods for solving (\ref{eqn:Axb})
that are based directly on $A$ and $b$ compute a
sequence $\{ x_m \}$ of approximate solutions
where\footnote{
  For simplicity we take $x_0 = 0$.
}
\[
  x_m \in \SPAN \{b, Ab, \ldots, A^{m-1}b \}
      \equiv \K{m}(A,b) .
\]
The iterate $x_m$ is often the unique vector that
satisfies either the \emph{Galerkin condition}
\begin{equation} \label{eqn:g}
  p^t (b - A \xG{m}) = 0,
  \quad \forany p \in \K{m}(A,b),
\end{equation}
or the \emph{minimum residual condition}
\[
  \xM{m} = \argmin_{z \in \K{m}(A,b)} \norm{b - Az} ,
\]
where $\norm{\cdot}$ denotes the Euclidean norm.
The latter is easily seen to be equivalent to
\begin{equation} \label{eqn:mr}
  (Ap)^t (b - A \xM{m}) = 0,
  \quad \forany p \in \K{m}(A,b) .
\end{equation}

A classic approach to solving (\ref{eqn:Axb}) is the
conjugate gradient method (itself a Krylov subspace
method based on the Galerkin condition) applied to
the normal equations, either $A A^t y = b$ or
$A^t A x = A^t b$.

CGNE~\cite{hestenes},\cite[p.~105]{greenbaum} (also
known as Craig's method~\cite{craig};
see Figure~\ref{fig:alg}) uses CG
to solve $A A^t y = b$ and sets $x = A^t y$.
Thus the iterate $\xE{q}$ is the unique vector
satisfying
\[
 \xE{q} = A^t \yE{q} \in A^t \K{q}(A A^t,b)
		       = \K{q}(A^t A,A^t b)
\]
and
\[
  p^t (b - A \xE{q}) = p^t (b - A A^t \yE{q}) = 0,
  \quad \forany p \in \K{q}(A A^t,b) .
\]
Since $A$ is skew symmetric, this can be written as
$\xE{q} \in \K{q}(A^2,Ab)$ and
\begin{equation} \label{eqn:cgne}
  p^t (b - A \xE{q}) = 0,
  \quad \forany p \in \K{q}(A^2,b) .
\end{equation}
Moreover, it follows
that~\cite{hestenes},\cite[p.~106]{greenbaum}
\[
  \xE{q} = \argmin_{z \in \K{q}(A^2,Ab)} \norm{z - x} .
\]

\begin{figure} \label{fig:alg} \centering \footnotesize
\begin{minipage}{5in}
 \begin{tabbing}
   xx \= xx \= xx \= xx \= xx \= xx \= xx \= \kill
   $r_0 = b$; $p_0 = A^t r_0$ \\
   \FOR~$q = 1$ \STEP~$1$ \UNTIL~convergence \\[.1em]
\>   $\alpha_q =
       \left\{ \begin{array}{@{}ll@{}}
	 \norm{r_{q-1}}^2     / \norm{p_{q-1}}^2   & (E) \\
	 \norm{A^t r_{q-1}}^2 / \norm{A p_{q-1}}^2 & (R) \\
       \end{array} \right.$ \\[.1em]
\>   $x_q = x_{q-1} + \alpha_q p_{q-1}$   \\
\>   $r_q = r_{q-1} - \alpha_q A p_{q-1}$ \\[.1em]
\>   $\beta_q =
       \left\{ \begin{array}{@{}ll@{}}
	 \norm{r_{q}}^2     / \norm{r_{q-1}}^2     & (E) \\
	 \norm{A^t r_{q}}^2 / \norm{A^t r_{q-1}}^2 & (R) \\
       \end{array} \right.$ \\[.3em]
\>   $p_q = A^t r_q + \beta_q p_{q-1}$
 \end{tabbing}
\end{minipage}
\qquad \qquad
\begin{minipage}{5in}
 \begin{tabbing}
   xx \= xx \= xx \= xx \= xx \= xx \= xx \= \kill
   $r_0 = b$; $p_0 = -A r_0$ \\
   \FOR~$q = 1$ \STEP~$1$ \UNTIL~convergence \\[.1em]
\>   $\alpha_q =
       \left\{ \begin{array}{@{}ll@{}}
	 \norm{r_{q-1}}^2   / \norm{p_{q-1}}^2   & (E) \\
	 \norm{A r_{q-1}}^2 / \norm{A p_{q-1}}^2 & (R) \\
       \end{array} \right. $\\[.1em]
\>   $x_q = x_{q-1} + \alpha_q p_{q-1}$   \\
\>   $r_q = r_{q-1} - \alpha_q A p_{q-1}$ \\[.1em]
\>   $\beta_q =
       \left\{ \begin{array}{@{}ll@{}}
	 \norm{r_{q}}^2   / \norm{r_{q-1}}^2   & (E) \\
	 \norm{A r_{q}}^2 / \norm{A r_{q-1}}^2 & (R) \\
       \end{array} \right.$ \\[.3em]
\>   $p_q = -A r_q + \beta_q p_{q-1}$
 \end{tabbing}
\end{minipage}
\caption{CGNE (E) and CGNR (R) for general (left) and
	 skew symmetric (right) systems.}
\end{figure}

CGNR~\cite{hestenes-stiefel},\cite[p.~105]{greenbaum}
(also known as
CGLS~\cite{paige-saunders}; see Figure~\ref{fig:alg})
uses CG to solve $A^t A x = A^t b$.
Thus the iterate $\xR{q}$ is the unique vector
satisfying $\xR{q} \in \K{q}(A^t A,A^t b)$ and
\[
  (Ap)^t (b - A \xR{q}) = p^t (A^t b - A^t A \xR{q}) = 0,
  \quad \forany p \in \K{q}(A^t A,A^t b) .
\]
Since $A$ is skew symmetric, this can be written as
$\xR{q} \in \K{q}(A^2,Ab)$ and
\begin{equation} \label{eqn:cgnr}
  (Ap)^t (b - A \xR{q}) = 0,
  \quad \forany p \in \K{q}(A^2,Ab) .
\end{equation}
Moreover, it follows
that~\cite{hestenes-stiefel},\cite[pp.~105--6]{greenbaum}
\[
  \xR{q} = \argmin_{z \in \K{q}(A^2,Ab)} \norm{b - Az} .
\]

CGNE and CGNR are often disparaged\footnote{
  Greenbaum~\cite[p.~106]{greenbaum} rebuts this view.}
since they square the condition number (which may slow
convergence) and may be more susceptible to round-off
error (which is why the algorithms in
Figure~\ref{fig:alg} avoid multiplication by $A A^t$ and
$A^t A$, respectively).

Thus in recent years several authors have derived Krylov
subspace methods that solve (\ref{eqn:Axb}) directly.
Gu and Qian~\cite{gu-qian} and Greif and
Varah~\cite{greif-varah} impose the Galerkin\footnote{
  Gu and Qian~\cite{gu-qian} claim incorrectly that they
  are imposing the minimum residual condition.
}
condition (\ref{eqn:g}) on the subspace $\K{m}(A,b)$;
while Jiang~\cite{jiang}, Idema and
Vuik~\cite{idema-vuik}, and Greif and
Varah~\cite{greif-varah} impose the minimum residual
condition (\ref{eqn:mr}).
Greif and Varah~\cite{greif-varah} show that the odd
iterates $\xG{2q+1}$ do not exist, that their
algorithm for the even iterates $\xG{2q}$ is equivalent
to CGNE, and that $\xM{2q+1} = \xM{2q}$.

In this paper we give new, algorithm-independent proofs
that $\xG{2q} = \xE{q}$ and that\footnote{
  That $\xM{2q} = \xR{q}$ also follows from the
  observation (see~\cite[\S2.4]{idema-vuik}) that the
  Huang, Wathen, and Li~\cite{huang-wathen-li}
  algorithm, which computes only the even iterates
  $\xM{2q}$, is equivalent to CGNR.}
$\xM{2q+1} = \xM{2q} = \xR{q}$.
More generally we show that any approximate solution
$z$ that belongs to $\K{m}(A,b)$ but not to
$\K{\floor{m/2}}(A^2,Ab)$ has a larger error
$\norm{z-x}$ and residual $\norm{b-Az}$ than its
projection onto the lower-dimensional subspace.
Thus there does not seem to be any advantage to seeking
an approximate solution in $\K{m}(A,b)$.

\section{Main results}
We begin with a simple consequence of skew symmetry.

\begin{lemma} \label{lemma}
If $A$ is skew symmetric, the subspaces $\K{s}(A^2,Ab)$
and $\K{t}(A^2,b)$ are orthogonal and the solution $x$
of $Ax = b$ is orthogonal to $\K{t}(A^2,b)$, for any
$s, t \geq 0$.
\end{lemma}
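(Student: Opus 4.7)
The plan is to exploit two elementary facts: every odd power of a skew-symmetric matrix (including $A^{-1}$, since $A$ is nonsingular) is itself skew-symmetric, and any quadratic form $v^t M v$ with $M$ skew-symmetric vanishes. I would reduce everything to these two observations by choosing the obvious basis of pure powers of $A$ applied to $b$ for each Krylov subspace.

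First I would note that $\K{t}(A^2,b) = \SPAN\{b, A^2 b, \ldots, A^{2t-2} b\}$ and $\K{s}(A^2, Ab) = \SPAN\{Ab, A^3 b, \ldots, A^{2s-1} b\}$, so by bilinearity it suffices to check that $(A^{2i+1} b)^t (A^{2j} b) = 0$ for the relevant $i, j \geq 0$. Using $A^t = -A$ and hence $(A^{2i+1})^t = (-A)^{2i+1} = -A^{2i+1}$, this inner product equals $-b^t A^{2(i+j)+1} b$. The matrix $A^{2(i+j)+1}$ is an odd power of a skew-symmetric matrix, hence skew-symmetric, so the quadratic form vanishes. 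This proves the orthogonality of the two subspaces.

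For the second assertion, I would write $x = A^{-1} b$ and verify that $A^{-1}$ is skew-symmetric: $(A^{-1})^t = (A^t)^{-1} = (-A)^{-1} = -A^{-1}$. Since $(A^{2j})^t = A^{2j}$, the inner product $(A^{2j} b)^t x$ simplifies to $b^t A^{2j-1} b$. For $j \geq 1$ this is a quadratic form in an odd positive power of $A$, and for $j = 0$ it is a quadratic form in $A^{-1}$; in either case the matrix is skew-symmetric, so the result is zero.

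There is no real obstacle here — the entire proof is carried by the two observations above, together with the decision to write the bases using pure powers of $A$ so that the inner products collapse to quadratic forms in odd (possibly negative) powers of $A$. The edge cases $s = 0$ or $t = 0$ are vacuous, since the corresponding Krylov subspaces are trivial.
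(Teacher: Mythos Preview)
Your proof is correct and follows essentially the same approach as the paper: reduce to basis vectors and show that each inner product is a quadratic form in a skew-symmetric matrix. The only cosmetic difference is that for the second assertion the paper substitutes $b = Ax$ (obtaining $(A^\ell x)^t A (A^\ell x)$) rather than $x = A^{-1}b$, thereby avoiding any mention of $A^{-1}$, but the underlying idea is identical.
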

\begin{proof}
Without loss of generality it suffices to show that
both $(A^2)^k Ab$ and $x$ are orthogonal to $(A^2)^\ell b$
for any $0 \leq k < s$ and $0 \leq \ell < t$.
But
\[
  \left( (A^2)^k Ab \right)^t \! \left((A^2)^\ell b \right)
  = \left( A^{2k+1} b \right)^t \! \left( A^{2\ell} b \right)
  = (-1)^{k+\ell+1} \! \left( A^{k+\ell} b \right)^t \!
	A \left( A^{k+\ell} b \right)
  = 0
\]
and
\[
  x^t \! \left( (A^2)^\ell b \right)
  = x^t \! \left( A^{2\ell} Ax \right)
  = (-1)^\ell \left( A^\ell x \right)^t \! A \left( A^\ell x \right)
  = 0
\]
since $z^t A z = 0$ for any $z$.
\end{proof}

By grouping even and odd powers of $A$,
any $p \in \K{m}(A,b)$ can be written as
$p = p_e + p_o$ for some $p_e \in \K{q_e}(A^2,b)$ and
$p_o \in \K{q_o}(A^2,Ab)$, where $q_e = \ceil{m/2}$ and
$q_o = \floor{m/2}$.
By Lemma~\ref{lemma} we have that $p_e$ is orthogonal to
$p_o$.

\begin{theorem} \label{thm:equal}
If $A$ is skew symmetric, the Galerkin iterates
$\{\xG{m}\}$ and the CGNE iterates $\{\xE{q}\}$
satisfy $\xG{2q} = \xE{q}$; and the minimum residual
iterates $\{\xM{m}\}$ and the CGNR iterates $\{\xR{q}\}$
satisfy $\xM{2q+1} = \xM{2q} = \xR{q}$.
\end{theorem}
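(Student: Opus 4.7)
The plan is to exhibit, in each case, the lower-dimensional CG iterate---$\xE{q}$ for the Galerkin claim and $\xR{q}$ for the min-residual claim---as a vector in the higher-dimensional Krylov subspace $\K{m}(A,b)$ that already satisfies the defining condition (\ref{eqn:g}) or (\ref{eqn:mr}) at that level. Uniqueness of the Galerkin and minimum residual iterates then forces the claimed equalities. The only machinery beyond this is the even/odd decomposition $p = p_e + p_o$ for $p \in \K{m}(A,b)$ noted just before the theorem, combined with Lemma~\ref{lemma}.

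For $\xG{2q} = \xE{q}$, I would take an arbitrary $p \in \K{2q}(A,b)$ and write $p = p_e + p_o$ with $p_e \in \K{q}(A^2,b)$ and $p_o \in \K{q}(A^2,Ab)$, so that $p^t(b - A\xE{q}) = p_e^t(b - A\xE{q}) + p_o^t(b - A\xE{q})$. The first term vanishes by (\ref{eqn:cgne}). For the second, $b \in \K{q+1}(A^2,b)$ and $A\xE{q} \in A \cdot \K{q}(A^2,Ab) \subseteq \K{q+1}(A^2,b)$, so $b - A\xE{q} \in \K{q+1}(A^2,b)$, which by Lemma~\ref{lemma} is orthogonal to $p_o \in \K{q}(A^2,Ab)$. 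Hence $\xE{q}$ satisfies (\ref{eqn:g}) at level $m = 2q$, and since $\xE{q} \in \K{q}(A^2,Ab) \subseteq \K{2q}(A,b)$, uniqueness gives $\xG{2q} = \xE{q}$.

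The min-residual equalities go essentially identically. Given $p \in \K{2q+1}(A,b)$, write $p = p_e + p_o$ with $p_e \in \K{q+1}(A^2,b)$ and $p_o \in \K{q}(A^2,Ab)$, and expand $(Ap)^t(b - A\xR{q})$. The $(Ap_o)^t$ term is killed by (\ref{eqn:cgnr}); for $(Ap_e)^t$, note that $Ap_e \in A \cdot \K{q+1}(A^2,b) \subseteq \K{q+1}(A^2,Ab)$ while $b - A\xR{q} \in \K{q+1}(A^2,b)$ by the same computation as before, so Lemma~\ref{lemma} again supplies the orthogonality. Thus $\xR{q}$ satisfies (\ref{eqn:mr}) at level $m = 2q+1$, which subsumes $m = 2q$ via the inclusion $\K{2q}(A,b) \subseteq \K{2q+1}(A,b)$, and uniqueness of $\xM{m}$ gives both $\xM{2q+1} = \xR{q}$ and $\xM{2q} = \xR{q}$.

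The main---really the only---obstacle is bookkeeping the Krylov subspace memberships to justify each application of Lemma~\ref{lemma}: tracking how multiplication by $A$ swaps $\K{k}(A^2,b)$ with $\K{k}(A^2,Ab)$ and can bump the index by one, and confirming that the residual quantities $b - A\xE{q}$ and $b - A\xR{q}$ each land in the appropriate even-power subspace. Once these inclusions are laid out, the theorem follows with no further computation.
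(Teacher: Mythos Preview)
Your proposal is correct and follows essentially the same approach as the paper's own proof: verify that the CGNE (resp.\ CGNR) iterate lies in the larger Krylov space and satisfies the Galerkin (resp.\ minimum residual) condition there by splitting a test vector $p = p_e + p_o$, killing one term via (\ref{eqn:cgne}) or (\ref{eqn:cgnr}) and the other via Lemma~\ref{lemma}, then invoking uniqueness. The only cosmetic difference is that in the minimum-residual case the paper moves the extra factor of $A$ onto the residual (writing $(Ap_e)^t(b-A\xR{q}) = -p_e^t\bigl(A(b-A\xR{q})\bigr)$ and showing $A(b-A\xR{q})\in\K{q+1}(A^2,Ab)$), whereas you keep it on $p_e$ and show $Ap_e \in \K{q+1}(A^2,Ab)$ is orthogonal to $b-A\xR{q}\in\K{q+1}(A^2,b)$; these are the same orthogonality.
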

\begin{proof}
($\xG{2q} = \xE{q}$):
Since $\xE{q} \in \K{q}(A^2,Ab) \subseteq \K{2q}(A,b)$,
by the Galerkin condition (\ref{eqn:g}) it suffices to
prove that $\xE{q}$ satisfies
\[
  p^t (b - A \xE{q}) = 0,
  \quad \forany p \in \K{2q}(A,b) .
\]
Any $p \in \K{2q}(A,b)$ can be written as $p = p_e + p_o$
as above.
Since $p_e \in \K{q}(A^2,b)$,
\[
  p^t (b - A\xE{q}) = p_e^t (b - A\xE{q}) + p_o^t (b - A\xE{q})
		    =                       p_o^t (b - A\xE{q})
\]
by (\ref{eqn:cgne}).
But since $p_o \in \K{q}(A^2,Ab)$ and
\[
  b - A\xE{q} \in b + A \K{q}(A^2,Ab) \subseteq \K{q+1}(A^2,b) ,
\]
we have that $p_o$ is orthogonal to $b-A\xE{q}$ by
Lemma~\ref{lemma} and so $p^t (b - A\xE{q}) = 0$.

($\xM{2q+1} = \xM{2q} = \xR{q}$):
Note that
$\K{q}(A^2,Ab) \subseteq \K{2q}(A,b) \subseteq \K{2q+1}(A,b)$.
Thus $\xR{q} \in \K{2q+1}(A,b)$ and
$\xR{q} \in \K{2q}(A,b)$;
and by the minimum residual condition (\ref{eqn:mr}) it
suffices to prove that $\xR{q}$ satisfies
\[
  (Ap)^t (b - A \xR{q}) = 0,
  \quad \forany p \in \K{2q+1}(A,b) ,
\]
for then
\[
  (Ap)^t (b - A \xR{q}) = 0,
  \quad \forany p \in \K{2q}(A,b)
\]
as well.
Any $p \in \K{2q+1}(A,b)$ can be written as
$p = p_e + p_o$ as above.
Since $p_o \in \K{q}(A^2,Ab)$,
\[
  (Ap)^t (b - A\xR{q})
  = (Ap_e)^t (b - A\xR{q}) + (Ap_o)^t (b - A\xR{q})
  = - p_e^t \left( A (b - A\xR{q}) \right)
\]
by (\ref{eqn:cgnr}).
But since $p_e \in \K{q+1}(A^2,b)$ and
\[
  A (b - A\xR{q}) \in Ab + A^2 \K{q}(A^2,Ab) \subseteq \K{q+1}(A^2,Ab) ,
\]
we have that $p_e$ is orthogonal to $A(b-A\xR{q})$
and so $(Ap)^t (b - A\xR{q}) = 0$.
\end{proof}

Finally we show that the extra dimensions in
$\K{m}(A,b)$ versus $\K{\floor{m/2}}(A^2,Ab)$ can not
decrease the norm of the error or the residual.

\begin{theorem} \label{thm:nobetter}
Let $z \in \K{m}(A,b)$ and write $z = z_e + z_o$ for
some $z_e \in \K{q_e}(A^2,b)$ and $z_o \in
\K{q_o}(A^2,Ab)$, where $q_e = \ceil{m/2}$ and $q_o =
\floor{m/2}$.
If $A$ is skew symmetric, the solution $x$ of $Ax = b$
satisfies
\[
  \norm{z-x}^2 = \norm{z_o-x}^2 + \norm{z_e}^2
  \mbox{\quad and \quad}
  \norm{b-Az}^2 = \norm{b-Az_o}^2 + \norm{Az_e}^2 .
\]
\end{theorem}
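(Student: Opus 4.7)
My plan is to prove both identities by writing each quantity as a sum of two vectors lying in orthogonal subspaces from Lemma~\ref{lemma}, and then applying the Pythagorean theorem.

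For the error identity, I would split $z - x = z_e + (z_o - x)$ and aim to show $z_e \perp (z_o - x)$. This reduces to two orthogonalities: $z_e \perp z_o$ follows from Lemma~\ref{lemma} since $z_e \in \K{q_e}(A^2,b)$ and $z_o \in \K{q_o}(A^2,Ab)$; and $z_e \perp x$ follows from the second assertion of Lemma~\ref{lemma}, which says $x \perp \K{q_e}(A^2,b)$. Pythagoras then yields $\norm{z-x}^2 = \norm{z_e}^2 + \norm{z_o-x}^2$.

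For the residual identity, I would split $b - Az = -Az_e + (b - Az_o)$ and aim to show $Az_e \perp (b - Az_o)$. The key observation is that multiplication by $A$ swaps the two even/odd-indexed Krylov subspaces from Lemma~\ref{lemma}: since $A \cdot A^{2k}b = A^{2k}(Ab)$, we have $Az_e \in A\K{q_e}(A^2,b) = \K{q_e}(A^2,Ab)$; and since $A \cdot A^{2k+1}b = A^{2(k+1)}b$, we have $Az_o \in A\K{q_o}(A^2,Ab) \subseteq \K{q_o+1}(A^2,b)$, so $b - Az_o \in \K{q_o+1}(A^2,b)$ as well. By Lemma~\ref{lemma}, $\K{q_e}(A^2,Ab)$ is orthogonal to $\K{q_o+1}(A^2,b)$, hence $Az_e \perp (b - Az_o)$ and Pythagoras gives the second identity.

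There is no real obstacle here; the work is essentially done once Lemma~\ref{lemma} is available, and the only thing to be careful about is the bookkeeping that relates $A\K{s}(A^2,b)$ to $\K{s}(A^2,Ab)$ and $A\K{s}(A^2,Ab)$ to a subspace of $\K{s+1}(A^2,b)$. Given the Galerkin/minimum-residual decomposition already used in the preceding theorem, this should be stated in one or two lines for each identity.
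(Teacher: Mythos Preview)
Your proposal is correct and follows essentially the same approach as the paper: you establish $z_e \perp (z_o - x)$ via Lemma~\ref{lemma} for the error identity, and $Az_e \perp (b - Az_o)$ via the inclusions $Az_e \in \K{q_e}(A^2,Ab)$ and $b - Az_o \in \K{q_o+1}(A^2,b)$ for the residual identity, then invoke Pythagoras in each case. The paper's proof is the same argument compressed into three sentences.
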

\begin{proof}
Since $z_o \in \K{q_o}(A^2,Ab)$, we have $z_o$ and
$x$ orthogonal to $z_e \in \K{q_e}(A^2,b)$ by
Lemma~\ref{lemma}.
Similarly, since
\[
  b-Az_o \in b + A \K{q_o}(A^2,Ab) \subseteq \K{q_o+1}(A^2,b)
\]
and
\[
  Az_e \in A \K{q_e}(A^2,b) = \K{q_e}(A^2,Ab) ,
\]
we have $b-Az_o$ orthogonal to $Az_e$.
Now apply the Pythagorean Theorem.
\end{proof}

\section{Conclusions}
Theorem~\ref{thm:nobetter} shows that there is no
advantage to using all of $\K{m}(A,b)$, and
Theorem~\ref{thm:equal} shows that CGNE and CGNR compute
the Galerkin and minimum residual iterates, at least in
exact arithmetic.\footnote{
  Paige and Saunders' LSQR is a more stable equivalent to
  CGNR if round-off error is an
  issue~\cite{paige-saunders}.
}
Thus a Krylov subspace method based on $\K{m}(A,b)$
would have to be at least as efficient and/or accurate to
warrant consideration.

Normally a Krylov subspace method is applied to a
preconditioned system
\begin{equation} \label{eqn:precon}
  \tilde{A} \tilde{x} \equiv (M_L^{-1} A M_R^{-1}) (M_R x)
    = (M_L^{-1} b) \equiv \tilde{b} .
\end{equation}
Greif and Varah~\cite{greif-varah} derive a
preconditioner (i.e., an $M_L$ and an $M_R$) for which
$\tilde{A}$ is skew symmetric, but many preconditioners
do not have this property and CGNE and CGNR
applied to (\ref{eqn:precon}) do not require it.
Thus a preconditioner for $A$ that does preserve skew
symmetry in $\tilde{A}$ would have to be at least as
efficient and/or accurate as the best general
preconditioner used with CGNE or CGNR / LSQR to warrant
consideration.

\section*{Acknowledgment}
The author thanks Chris Paige
for suggestions that improved the exposition.


\section*{References}
\bibliography{skew}
\end{document}